\documentclass[11pt,a4paper,makeidx, amstex]{amsart}
\usepackage[latin1]{inputenc}        
\usepackage[dvips]{graphics}

\usepackage
{graphicx}
\usepackage{epstopdf}
\usepackage{mathrsfs}
\let\mathcal\mathscr
\headsep 0.8 cm
\pagestyle{empty}                            
\oddsidemargin -10 true pt      
\evensidemargin 10 true pt      
\marginparwidth 0.75 true in    
\oddsidemargin  0 true in       
\evensidemargin 0 true in
\topmargin -0.75 true in        
\textheight 9.5 true in         
\textwidth 6.375 true in        
\parindent=0pt                  
\parskip=0.15 true in
\usepackage{color}		
\usepackage{epsfig}
\psdraft

\usepackage{amssymb}
\usepackage{amsmath}
\usepackage{amsfonts}
\usepackage{latexsym}
\usepackage[T1]{fontenc}
\usepackage[latin1]{inputenc}
\usepackage{hyperref}
\usepackage[all,ps]{xy}
\RequirePackage{amsthm}
\RequirePackage{amssymb}
\usepackage[all,ps]{xy}
\usepackage{latexsym}
\usepackage{amscd}
\pagestyle{plain}

\pagestyle{plain}
\usepackage[latin1]{inputenc}
\usepackage[T1]{fontenc}
\usepackage{color}
\RequirePackage{amsthm}
\RequirePackage{amssymb}
\usepackage[all,ps]{xy}
\usepackage{latexsym}

\def\Z{{\mathbb Z}}

\def\Q{{\mathbb Q}}

\def\Im{\mathop{\rm Im}\nolimits}

\def\Mon{\mathop{\rm Mon}\nolimits}

\def\Pic{\mathop{\rm Pic}\nolimits}
\def\supp{\mathop{\rm Supp}\nolimits}

\def\tilde{\widetilde}

\def\phi{\varphi}

\def\Pic{\mathop{\rm Pic}\nolimits}
\def\dim{\mathop{\rm dim}\nolimits}

\newcommand{\ra}{\rightarrow}

\newtheorem{thm}{Theorem}[section]
\newtheorem{defn}[thm]{Definition}

\newtheorem{cor}[thm]{Corollary}
\newtheorem{rmk}[thm]{Remark}
\newtheorem{prop}[thm]{Proposition}
\newtheorem{lem}[thm]{Lemma}

\title[Polarized parallel transport and uniruled divisors]{Polarized parallel transport and uniruled divisors on deformations of generalized Kummer varieties}

\author{Giovanni Mongardi}
\address{Dipartimento di Matematica, Universit\'a degli studi di Milano, via Cesare Saldini 50 - 20121 Milano, Italy}
\email{giovanni.mongardi@unimi.it}
\author{Gianluca Pacienza}
\address{Institut de Recherche Math\'ematique Avanc\'ee,
Universit\'e de Strasbourg et CNRS,
7, Rue R. Descartes - 67084 Strasbourg CEDEX, France}
\email{pacienza@math.unistra.fr}
\date{\today}

\begin{document}

%
\begin{abstract}
In this note we characterize polarized parallel transport operators on irreducible holomorphic symplectic varieties which are deformations of generalized Kummer varieties. We then apply such characterization to show the existence of ample uniruled divisors on these varieties and derive some interesting consequences on their Chow group of 0-cycles.
\end{abstract}
%
%
%
%
\maketitle
{\let\thefootnote\relax
\footnote{\hskip-1.2em
\textbf{Key-words :} rational curves; holomorphic symplectic varieties.\\
\noindent
\textbf{A.M.S.~classification :} 14C99, 14J28, 14J35, 14J40. \\
\noindent
G.M. is supported by FIRB 2012 ``Spazi di Moduli e applicazioni''. G.P. was partially supported by the University of Strasbourg Institute for Advanced Study (USIAS) as part of a USIAS Fellowship. The project was partially funded by GDRE ``GRIFGA''. 
 }}
\numberwithin{equation}{section}

%
\section{Introduction}
%
The purpose of this note is to show that the techniques and results from \cite{Markman09} can be extended, using \cite{mon}, to study  polarized parallel transport operators (cf. Definition \ref{def:op} below) on projective irreducible holomorphic symplectic varieties which are deformations of generalized Kummer varieties (we call such varieties ``of $K_n(A)$-type'').   Moreover, such a study can be successfully applied, along the lines of \cite{CP}, to prove the existence of ample uniruled divisors on these varieties and derive interesting consequences on their Chow group of 0-cycles, thus unifying the picture for the two deformation types known to exist in all dimension. The key object is a monodromy invariant function $f_{X}$ that we define (cf. Section \ref{sec:mon}) from divisors on any $X$ of $K_n(A)$-type which satisfies the following. 

\begin{thm}\label{thm:parallel}
Let $(X_1,H_1)$ and $(X_2,H_2)$ be two primitively polarized irreducible holomorphic symplectic variety of $K_n(A)$-type such that the square of the polarizations with respect to the Beauville-Bogomolov forms $q_{X_1}$ and $q_{X_2}$ are the same $q_{X_1}(H_1)=q_{X_2}(H_2)$. Then $f_{X_1}(c_1(H_1))=f_{X_2}(c_1(H_2))$ if and only if there exists a polarized parallel-transport operator from $(X_1,H_1)$ to $(X_2,H_2)$.
\end{thm}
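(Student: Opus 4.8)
The plan is to split the equivalence into its two directions, with the ``only if'' implication being essentially formal and the ``if'' implication carrying all the geometric content.

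For the ``only if'' direction, suppose $g\colon H^2(X_1,\Z)\to H^2(X_2,\Z)$ is a polarized parallel transport operator, so that $g$ is in particular a parallel transport operator with $g(c_1(H_1))=c_1(H_2)$. Since $f_X$ is by construction invariant under parallel transport---this being the precise content of its monodromy invariance, established in Section~\ref{sec:mon}---one has $f_{X_1}(c_1(H_1))=f_{X_2}(g(c_1(H_1)))=f_{X_2}(c_1(H_2))$, as required.

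For the ``if'' direction I would proceed in four steps. First, since all varieties of $K_n(A)$-type are deformation equivalent, there exists at least one parallel transport operator $g_0\colon H^2(X_1,\Z)\to H^2(X_2,\Z)$; this reduces the problem to a statement internal to $X_2$. Indeed the primitive classes $v:=g_0(c_1(H_1))$ and $w:=c_1(H_2)$ satisfy $q_{X_2}(v)=q_{X_2}(w)$, by the hypothesis on the squares together with the fact that $g_0$ is an isometry, and $f_{X_2}(v)=f_{X_1}(c_1(H_1))=f_{X_2}(w)$, by the invariance of $f$ under $g_0$ and the standing hypothesis. Second---and this is the heart of the matter---I would invoke the description of the monodromy group $\Mon^2(X_2)$ for $K_n(A)$-type obtained in \cite{mon} to show that $\Mon^2(X_2)$ acts transitively on the set of primitive classes of a fixed Beauville--Bogomolov square and a fixed value of $f_{X_2}$; equivalently, that $f_{X_2}$ is a \emph{complete} invariant for the monodromy orbits. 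This produces $\psi\in\Mon^2(X_2)$ with $\psi(v)=w$. Third, the composite $g:=\psi\circ g_0$ is again a parallel transport operator, and by construction $g(c_1(H_1))=c_1(H_2)$. Fourth, I would upgrade $g$ to a \emph{polarized} parallel transport operator: an operator matching the two ample classes can be realized along a path in the moduli space of marked polarized varieties, using surjectivity of the period map and connectedness of the relevant period domain, in the style of \cite{Markman09}.

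The main obstacle is the second step, the transitivity of the monodromy action on primitive vectors of fixed square and fixed $f$-value. For $K_n(A)$-type the group $\Mon^2$ is strictly smaller than the subgroup of isometries cut out by the orientation and spinor-norm conditions, so---unlike the K3 case treated in \cite{Markman09}---there are genuinely several orbits for a given square, and the function $f_X$ is designed precisely to separate them; checking that it separates them completely is where the explicit structure of $\Mon^2$ from \cite{mon} must be used in full. A secondary technical point lies in the fourth step: one must ensure that the abstract lattice isometry $g$ is realized by an honest family carrying a relative polarization, rather than by a merely unpolarized deformation, which requires tracking the ample cone throughout the period-map argument.
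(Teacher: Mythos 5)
Your overall skeleton (monodromy invariance gives the ``only if''; an arbitrary parallel transport operator composed with a suitable monodromy operator gives the ``if''; then upgrade to a polarized operator) is the same one that underlies the paper's proof, but your proposal leaves unproven exactly the step that carries the content of the theorem, and it is not a step one can simply ``invoke''. Your Step 2 asserts that $\Mon^2(X_2)$ acts transitively on primitive classes with fixed square and fixed $f$-value, i.e.\ that $f$ is a \emph{faithful} monodromy invariant. The paper does not obtain this from the abstract description $\Mon^2(X)=(\det\cdot\chi)^{-1}(1)$ inside $O(H^2(X,\Z))$ alone: it first uses the density of (polarized) moduli spaces in the (polarized) deformation space (Remark \ref{remark:projective-def-classes}) to reduce to $X=M_v(A)$, where $H^2(X,\Z)\cong v^\perp\subset H^{2*}(A,\Z)$, and \emph{defines} $f_{v,X}(w)$ as the $\Gamma_v$-orbit of the saturation of the lattice spanned by $v$ and $w$. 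Faithfulness then comes from a two-way passage through the Mukai lattice: a monodromy operator $g$ extends to $\tilde g\in\Gamma_v$ by setting $\tilde g(v)=\chi(g)v$, using $\det(g)\chi(g)=1$ (this is the proof of Lemma \ref{lem:moninv}), and conversely an element of $\Gamma_v$ restricts on $v^\perp$ to an isometry with $\det\cdot\chi=1$, hence a monodromy operator by Theorem \ref{thm:mon}. Without this embedding there is, for a general $X$ of $K_n(A)$-type, no definition of $f_X$ to work with at all --- $f_X$ is only defined by parallel transport from a moduli space --- so your ``statement internal to $X_2$'' presupposes precisely the apparatus you never set up. Relatedly, your Step 1 silently uses that the parallel-transport extension of $f$ is well defined, independently of the path and of the moduli-space realization; this is not automatic and is exactly what Lemmas \ref{lem:cambiov} and \ref{lem:cambioA} and Remarks \ref{rmk:1}--\ref{rmk:3} establish.

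By contrast, your Steps 1, 3 and 4 are not where the effort should go: they are packaged in Markman's Lemma 5.17 of \cite{Markman09} (quoted as Lemma \ref{lem:mark}), which the paper cites rather than reproves; in particular the polarized upgrade you propose to redo via period maps and connectedness of the period domain is the second bullet of that lemma. So your plan is right at the level of ideas, but as written it has a genuine gap: the faithfulness of $f$ (your Step 2) is correctly identified as the heart of the matter and then never argued, and the ingredients that make it provable --- the reduction to moduli spaces of stable objects on abelian surfaces, the extension of monodromy operators to the Mukai lattice, and the well-definedness of the invariant under change of moduli-space realization --- are absent from the proposal.
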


Thanks to the control of the deformation theory of rational curves covering a divisor inside irreducible holomorphic symplectic varieties (cf. \cite[Corollary 3.5]{CP}) and to the existence of many uniruled divisors inside generalized Kummer varieties (cf. Section \ref{sec:ex}), we can apply this result to prove the following:

\begin{thm}\label{thm:existence}
Any ample linear system on any irreducible holomorphic symplectic variety of $K_n(A)$-type
contains a member which is a sum of uniruled divisors. 
\end{thm}

As shown in \cite[Theorems 5.1 and 5.4]{CP}, the existence of uniruled divisors in any ample linear system implies the following. 
\begin{cor}\label{cor:chow} Let $X$ be a projective irreducible holomorphic symplectic variety of $K_n(A)$-type.
\begin{enumerate}
\item[(i)] For any $D_1,D_2\subset X$ irreducible and uniruled divisors we have
$$
\Im \big( CH_0(D_1)_\Q \to CH_0(X)_\Q \big)= \Im \big( CH_0(D_2)_\Q \to CH_0(X)_\Q\big)=: S_1(X)_\Q
$$
\item[(ii)] If $\Pic(X)\otimes CH_1(X)\to   CH_0(X)$ denotes the natural intersection map, we have 
$$
\Im \big(\Pic(X)\otimes CH_1(X)_\Q\to   CH_0(X)_\Q\big)= S_1(X)_\Q.
$$
 \end{enumerate}
\end{cor}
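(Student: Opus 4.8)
The plan is to deduce both statements formally from Theorem~\ref{thm:existence} together with \cite[Theorems 5.1 and 5.4]{CP}. The arguments of \cite{CP} are purely Chow-theoretic and isolate a single geometric hypothesis on a projective irreducible holomorphic symplectic variety $X$: that every ample linear system contain a member which is a sum of uniruled divisors. For $K3^{[n]}$-type this input was already available, and Theorem~\ref{thm:existence} now supplies exactly the same input for $X$ of $K_n(A)$-type. So I would first record that the hypotheses of \cite[Theorems 5.1 and 5.4]{CP} are met verbatim for $X$ of $K_n(A)$-type thanks to Theorem~\ref{thm:existence}, and then invoke those two theorems to obtain (i) and (ii) respectively.

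For the reader I would recall the mechanism behind (i). If $D$ is an irreducible uniruled divisor it is swept out by a family of rational curves, so every $x\in D$ lies on some rational curve $C\subset D$; since any ample divisor $A$ satisfies $A\cdot C>0$, the curve $C$ meets $A$ in a point $y$, and $[x]=[y]$ in $CH_0(X)_\Q$ because $x$ and $y$ both lie on the rational curve $C$. This yields $\Im\big(CH_0(D)_\Q\to CH_0(X)_\Q\big)\subseteq \Im\big(CH_0(A)_\Q\to CH_0(X)_\Q\big)$ for any ample $A$. Taking $A$ to be an ample uniruled divisor produced by Theorem~\ref{thm:existence} and running this comparison with the roles of the two divisors suitably exchanged gives, for any two irreducible uniruled divisors $D_1,D_2$, the coincidence of their images; this common subgroup is $S_1(X)_\Q$.

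For (ii) both inclusions are transparent over $\Q$. For $\subseteq$ I would write an arbitrary class in $\Pic(X)_\Q$ as a difference of ample classes; representing each ample class by a uniruled divisor via Theorem~\ref{thm:existence}, every intersection cycle $D'\cdot C$ becomes a combination of $0$-cycles supported on uniruled divisors, hence lies in $S_1(X)_\Q$. Conversely, for $x$ lying on a rational curve $C$ I would choose a divisor $H$ with $H\cdot C\neq 0$ and write $[x]=\tfrac{1}{H\cdot C}\,H\cdot[C]$ in $CH_0(X)_\Q$, thereby exhibiting each generator of $S_1(X)_\Q$ as an intersection class.

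The only genuine point requiring care — and the place where Theorem~\ref{thm:existence} is really used — is the reconciliation between the output of that theorem, which is a member of an ample system that is a \emph{sum} of uniruled divisors, and the \emph{irreducible} uniruled divisors quantified over in (i). Extracting from an ample class the positivity needed to make every rational curve in $D_1$ or $D_2$ meet a uniruled locus, and checking that the resulting subgroup $S_1(X)_\Q$ is independent of all choices, is precisely the bookkeeping carried out in \cite{CP}. Once the hypothesis of that reference is seen to hold for $X$ of $K_n(A)$-type, no geometric input beyond Theorem~\ref{thm:existence} is needed, and (i) and (ii) follow.
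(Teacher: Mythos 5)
Your proposal is correct and follows exactly the paper's route: the paper derives the corollary by observing that Theorem~\ref{thm:existence} supplies precisely the hypothesis (a member of every ample linear system that is a sum of uniruled divisors) needed to invoke \cite[Theorems 5.1 and 5.4]{CP}, which is what you do. The expository sketch you add of the mechanism inside \cite{CP} is accurate but not part of the paper's own (one-line) proof.
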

Corollary \ref{cor:chow} represents a generalization to projective irreducible holomorphic  symplectic varieties of $K_n(A)$-type of the Beauville-Voisin results \cite{BV} saying that (i) any point on any rational curve on a projective $K3$ surface $X$ determines the same class $c_X$ inside $CH_0(X)$ and (ii) the intersection product $\Pic(X)\otimes \Pic(X)\to   CH_0(X)$ takes values in the multiples of $c_X$. 

The motivation for our result comes from Beauville's conjectural reinterpretation \cite{B07} of the above results from \cite{BV}, known under the name of  ``weak splitting property conjecture'' and by the more recent \cite{V15}, where Voisin suggests a strategy to study the conjectural splitting of the Bloch-Beilinson filtration based on the (conjectural) existence of codimension $i$ subvarieties covered by $i$-dimensional constant cycle subvarieties (see \cite[Conjecture 0.4]{V15}). Theorem \ref{thm:existence} proves this conjecture for $i=1$ and $X$ of  $K_n(A)$-type. Closely related results are those of Lin \cite{Lin}, who proves Voisin's conjecture when $X=K_n(A)$, and those of Fu \cite{Fu} who shows that the Beauville weak splitting property conjecture holds when $X=K_n(A)$.

%
\section{Preliminaries}
%
Let $A$ be an abelian surface. Let $A^{[n+1]}$ denote its Hilbert scheme of length $n+1$ zero dimensional subschemes of $A$. It has dimension $2n+2$ and its Albanese map surjects onto $A$ itself. Let $K_n(A)$ denote a fibre of the Albanese map. This manifold is called generalized Kummer variety of $A$ and it is an irreducible holomorphic symplectic manifold, i.e. a simply connected compact K\"ahler manifold such that the $H^{2,0}(X)$ is generated by a symplectic form.
Moreover, a  generalized Kummer variety has $b_2=7$. We denote by $\tilde{\Lambda}$ the weight two Hodge structure on $H^{2*}(A,\mathbb{Z})$ with bilinear form given by the intersection pairing and $(2,0)$ part given by $H^{2,0}(A)$. Its elements are usually denoted $(r,D,s)$, where $r\in H^0,\,s\in H^4$, $D\in H^2$ and $(r,D,s)^2=D^2-2rs$.  As a lattice, it is isometric to $U^4$, where as usual 
$U$ is the hyperbolic lattice $\begin{pmatrix}
0& 1\\
1& 0
\end{pmatrix}.$
 The second integral cohomology of $K_n(A)$ can be embedded in $\tilde{\Lambda}$ in a canonical way and, under this embedding, we have $H^2(A)\subset H^2(K_n(A))$ and $H^2(K_n(A))^\perp=(2n+2)$, where $(2n+2)$ denotes a one dimensional lattice generated by an element of square $2n+2$. Thus, $H^2(K_n(A))\cong U^3\oplus (-2n-2)$. 
The restriction $\Delta_n$ to $K_n(A)$ of the exceptional divisor of the Hilbert-Chow morphism $A^{[n+1]}\to A^{(n+1)}$ has class divisible by two and if we write $\Delta_n=2e_n$ we have the following decomposition
$$
H^2(K_n(A),\mathbb Z)=H^2(A,\mathbb Z) \oplus \mathbb Z e_n
$$
 which is orthogonal with respect to the Beauville-Bogomolov form on $H^2(K_n(A),\mathbb Z)$. Moreover, we have $q(e_n)=-2(n+1)$. Dually, we have
 $$
  N_1(K_n(A))= N_1(A) \oplus \mathbb Z r_n
 $$
where $r_n$ is the class of an exceptional rational curve, i.e. a general fiber of the Hilbert-Chow morphism.

Let $L$ be an even lattice, and let $A_L:=L^\vee/L$ be its discriminant group. It inherits a quadratic form with values in $\mathbb{Q}/2\mathbb{Z}$ from the quadratic form on $L$. 
Recall that the { divisibility} $div(v)$ of an element $v\in L$ is the positive generator 
of the ideal $v\cdot L \subset \mathbb Z$. We will denote by $[v/div(v)]$ the class in the discriminant group $A_L$ of the (primitive) element $v/div(v)\in L^\vee$.

When the lattice $L$ contains two copies of the hyperbolic lattice $U$, the discriminant form governs most of the properties of $L$ and, in particular, we have the following isometry criterion, known as Eichler's criterion.

\begin{lem}[\cite{GriHulSan10}, Lemma 3.5]\label{lem:eichler}
Let $L'$ be an even lattice and let $L=U^2\oplus L'$. Let $v,w\in L$ be two primitive elements such that the following holds:
\begin{itemize}\renewcommand{\labelitemi}{$\bullet$}
\item $v^2=w^2$.
\item $[v/div(v)]=[w/div(w)]$ in $A_L$.
\end{itemize}
Then there exists an isometry $g\in \tilde{O}^+(L)$ with determinant one and such that $g(v)=w$.
\end{lem}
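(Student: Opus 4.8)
The plan is to prove Eichler's criterion through the theory of Eichler transvections, the classical tool for such transitivity statements. Fix the decomposition $L = U^2 \oplus L'$ and choose in each hyperbolic summand a standard isotropic basis, say $U_1 = \langle e_1, f_1 \rangle$ and $U_2 = \langle e_2, f_2 \rangle$ with $e_i^2 = f_i^2 = 0$ and $e_i \cdot f_i = 1$. For an isotropic vector $h$ lying in one of these copies of $U$ and any $a \in L$ with $a \cdot h = 0$, introduce the Eichler transvection
$$
E_{h,a}(x) = x - (a \cdot x)\,h + (h \cdot x)\,a - \tfrac{1}{2}(a \cdot a)(h \cdot x)\,h.
$$
The first step is to check that each $E_{h,a}$ is a genuine isometry of $L$ lying in $\tilde{O}^+(L)$: a direct computation shows it preserves the bilinear form, and because $h$ is isotropic with $a \cdot h = 0$ it acts as the identity on the discriminant group $A_L$; moreover $E_{h,a} = \mathrm{id} + (\text{nilpotent})$ is unipotent, so its determinant is one and its spinor norm is trivial, placing it in $O^+(L)$ and hence in $\tilde{O}^+(L)$. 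This verification is routine linear algebra.

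The substance of the argument is to show that the subgroup $E(L) \subseteq \tilde{O}^+(L)$ generated by all such transvections already acts transitively on the set of primitive vectors of a fixed square and a fixed discriminant class $[v/div(v)] \in A_L$. Granting this, the element carrying $v$ to a chosen normal-form vector and the element carrying $w$ to the same normal-form vector combine into an element $g \in \tilde{O}^+(L)$, of determinant one, with $g(v) = w$, which is exactly the assertion.

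To prove transitivity I would reduce a primitive $v$ to a normal form in two stages, exploiting the two hyperbolic planes. First, applying transvections attached to $U_2$, arrange that the $U_1$-component of $v$ is nonzero; then, using transvections $E_{f_1,a}$ and $E_{e_1,a}$ with $a$ ranging over $U_1^\perp = U_2 \oplus L'$, one can alter the $U_1^\perp$-part of $v$ by controlled multiples and successively clear it, bringing $v$ to a vector supported essentially on $U_1$ together with a single residual vector whose length and discriminant contribution are pinned down by the two invariants. The main obstacle is precisely this arithmetic bookkeeping: tracking how each transvection acts on the coordinates and confirming that the only surviving invariants of the orbit are $v^2$ and $[v/div(v)]$. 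This is the step where the hypothesis $U^2 \subseteq L$ is indispensable, since two independent isotropic directions are needed to move $v$ freely enough to absorb all other data. Once $v$ and $w$ are both in normal form, the hypotheses $v^2 = w^2$ and $[v/div(v)] = [w/div(w)]$ force the two normal forms to coincide, completing the proof.
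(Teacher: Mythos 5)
You should first note the point of comparison: the paper gives no proof of Lemma \ref{lem:eichler} at all --- it is imported verbatim from \cite{GriHulSan10}, and the only hint of an argument is the remark immediately following it, namely that the isometry can be taken to be a composition of Eichler transvections, each of determinant one. So the relevant benchmark is the standard proof in the literature, and your proposal follows exactly that route. Your $E_{h,a}$ are the classical Eichler transvections, and your preliminary verifications are correct and genuinely routine: $E_{h,a}$ preserves the form; for $x\in L^\vee$ one has $E_{h,a}(x)-x\in L$ (evenness of $L$ enters here, so that $\tfrac12(a\cdot a)\in\Z$), whence the action on $A_L$ is trivial; and $E_{h,a}=\mathrm{id}+N$ with $N^3=0$, so it is unipotent, has determinant one, and lies in the identity component of the real orthogonal group, hence in $O^+(L)$.

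The genuine gap is at the step you dismiss as ``arithmetic bookkeeping''. The transitivity of the group generated by these transvections on primitive vectors with prescribed square and prescribed class in $A_L$ is not a technical leftover once the setup is in place: it \emph{is} Eichler's criterion, i.e.\ the entire content of the lemma. Your reduction strategy is the right one --- for instance, writing $v=\alpha e_1+\beta f_1+u$ with $u\in U_1^\perp$, one computes
$E_{e_1,a}(v)=\bigl(\alpha-(a\cdot u)-\tfrac{\beta}{2}(a\cdot a)\bigr)e_1+\beta f_1+(u+\beta a)$,
so the $U_1^\perp$-component can be translated by $\beta a$, and by $\alpha a$ using $E_{f_1,a}$ --- but the actual work is the gcd/congruence argument on these coefficients showing that every primitive $v$ can be driven to a single canonical representative of the shape $d\,e_1+b\,f_1+u_0$, where $d=div(v)$, $u_0$ is a fixed vector with $[u_0/d]=[v/div(v)]$ in $A_L$, and $b$ is then determined by $v^2$; and that this representative depends on nothing else. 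None of that is carried out, and your closing step, ``confirming that the only surviving invariants of the orbit are $v^2$ and $[v/div(v)]$'', is a restatement of the lemma rather than a proof of it. To complete the argument you would need to execute this normal-form reduction explicitly, as is done in \cite{GriHulSan10} and in Eichler's original treatment; as written, the proposal establishes the easy properties of the transvections and defers precisely the part that makes the criterion a theorem.
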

Here, $O^+$ denotes isometries preserving the orientation on $L$ and $\tilde{O}$ denotes isometries whose induced action on $A_L$ is trivial. Moreover, the isometry given by the Lemma is a composition of three basic isometries, called Eichler's transvections, which have determinant one.
The above Lemma is paramount, due to the following characterization of the monodromy group of a manifold $X$ of $K_n(A)$-type, which can be found in \cite{mon}. Recall that the monodromy group $\Mon^2(X)$ is the group of isometries of $H^2(X,\mathbb Z)$ obtained by parallel transport.
Let $\chi$ be the character of $O(H^2(X,\mathbb{Z}))$ on the discriminant group $A_X:=H^2(X,\mathbb{Z})^\vee/H^2(X,\mathbb{Z})$. More explicitely, if $T\in O(H^2(X,\mathbb{Z}))$ and $[\phi] \in A_X$, then $T [\phi]$ is the element of $A_X$ sending $v\in H^2(X,\mathbb{Z})$ to the class of $\phi (T v)$.
 Let $det$ be the determinant character.  
\begin{thm}[\cite{mon}, Theorem 2.3]\label{thm:mon}
Let $X$ be a holomorphic symplectic manifold of $K_n(A)$-type. 
Then $\Mon^2(X)=(det\cdot \chi)^{-1} (1)$.
\end{thm}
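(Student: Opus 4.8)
The plan is to prove the equality by the two inclusions, working throughout on the abstract lattice $L:=H^2(X,\mathbb Z)\cong U^3\oplus(-2n-2)$, whose discriminant group $A_X$ is cyclic of order $2n+2$, and reading $\det\cdot\chi$ as a character on the orientation preserving isometries $O^+(L)$. The two structural inputs I would rely on are: the general theory of parallel transport on irreducible holomorphic symplectic manifolds (in the style of \cite{Markman09}), which gives that $\Mon^2(X)$ is a finite index subgroup of $O^+(L)$ preserving the orientation of the positive three-space; and the realization of $X$ (up to deformation) as an Albanese fibre of a moduli space $M_A(v)$ of stable sheaves on an abelian surface $A$, under which $L=v^\perp$ embeds in the Mukai lattice $\tilde{\Lambda}\cong U^4$ as the orthogonal complement of a primitive vector $v$ with $v^2=2n+2$, exactly as in the description of $H^2(K_n(A))$ recalled above.

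\emph{First inclusion} $\Mon^2(X)\subseteq(\det\cdot\chi)^{-1}(1)$. Parallel transport preserves the orientation of the positive three-space, so $\Mon^2(X)\subseteq O^+(L)$, and it remains to show that $\det\cdot\chi$ is trivial on every monodromy operator. I would deduce this from the fact that, for this lattice, $\det\cdot\chi$ is a parallel-transport invariant: it can be expressed through the action on the orientation data of $H^2(X,\mathbb R)$ together with the induced action on $A_X$, where parallel transport controls the former and is constrained on the latter, and for $U^3\oplus(-2n-2)$ these two constraints combine precisely into $\det\cdot\chi=1$. In particular the isometries with $\det\cdot\chi=-1$ — for instance the reflection in a $(-2)$-class of divisibility one, which lies in $O^+(L)$, has $\det=-1$, and acts trivially on $A_X$ — are ruled out of $\Mon^2(X)$.

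\emph{Second inclusion} $(\det\cdot\chi)^{-1}(1)\subseteq\Mon^2(X)$. This is the substance, and I would obtain it by producing enough monodromy operators. Each autoequivalence $\Phi\in\mathrm{Aut}(D^b(A))$ induces an isometry $\Phi^H$ of $\tilde{\Lambda}$ (Mukai, Orlov); whenever $\Phi^H$ fixes $v$, or carries it to another Mukai vector whose moduli space is deformation equivalent to $X$, its restriction to $v^\perp=L$ is a genuine parallel-transport operator, and one computes directly the determinant and the discriminant action of these restrictions. To see that such operators generate the whole of $(\det\cdot\chi)^{-1}(1)$, I would invoke Eichler's criterion (Lemma \ref{lem:eichler}): since $L\supseteq U^2$, the Eichler transvections — visibly monodromy operators, of determinant one and trivial on $A_X$ — act transitively on primitive vectors of fixed square and fixed class in $A_X$, so it suffices to realize, for each admissible class in the finite group $O(A_X)$, a single monodromy operator inducing it with the correct determinant.

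\emph{Main obstacle.} The delicate point is exactly this last matching: one must show that the discriminant actions realized by autoequivalences of $D^b(A)$ (and by explicit birational models of $M_A(v)$) are precisely those permitted by $\det\cdot\chi=1$ — no fewer, else the inclusion fails, and no more, else it would contradict the first step. Because $A_X\cong\mathbb Z/(2n+2)$ can carry form-preserving automorphisms beyond $\pm\mathrm{id}$ when $2n+2$ is not a prime power, several independent generators may be needed, and their determinants and induced isometries of $A_X$ must be tracked simultaneously; controlling this bookkeeping, rather than any single geometric construction, is where I expect the real difficulty to lie.
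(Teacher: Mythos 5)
A preliminary remark on the comparison itself: Theorem \ref{thm:mon} is not proved in this paper at all --- it is imported verbatim from \cite{mon} (Theorem 2.3 there), and everything downstream (Lemma \ref{lem:moninv}, Theorem \ref{thm:parallel}) uses it as a black box. So your proposal has to be measured against the proof in the cited work. At the level of architecture it does match: that proof runs the two inclusions, obtains the lower bound from parallel-transport operators induced by derived (anti-)equivalences of abelian surfaces restricted to $v^\perp$ (Yoshioka, in Markman's framework from \cite{Markman09}) together with Eichler-type transvections via Lemma \ref{lem:eichler}, and obtains the upper bound from a monodromy invariant. Your reading of the statement inside $O^+$ is also the correct one.

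The genuine gap is your first inclusion, and it is not a small one, because that is where the content of the theorem lies. You propose to rule out operators with $\det\cdot\chi=-1$ by a soft principle: orientation data on $H^2(X,\R)$ plus ``constraints'' on the induced action on $A_X$ are said to ``combine precisely into $\det\cdot\chi=1$.'' No such principle can exist. Manifolds of $K3^{[n]}$-type are in exactly the same formal situation: $H^2$ is an even lattice containing $U^2$ with cyclic discriminant group, monodromy is orientation-preserving and acts as $\pm1$ on the discriminant; yet there, by Markman's theorem, $\Mon^2$ is the \emph{full} group of orientation-preserving isometries acting as $\pm 1$ on the discriminant --- in particular the reflection in a $(-2)$-class of divisibility one (determinant $-1$, trivial on $A_X$, lying in $O^+$) \emph{is} a monodromy operator. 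Such classes also exist in $U^3\oplus\langle -2n-2\rangle$ (inside a hyperbolic summand), so your argument, which invokes nothing specific to the Kummer deformation type, would apply verbatim to $K3^{[n]}$ and contradict Markman. In other words, the exclusion of the $(-2)$-reflection --- the very example you present as ``ruled out'' --- is precisely what must be proved, and your sketch contains no mechanism for it. In \cite{mon} this step rests on a finer, Kummer-specific monodromy invariant: one shows that every monodromy operator $g$ extends to an isometry $\tilde{g}$ of the Mukai lattice $\tilde{\Lambda}\cong U^4$ with $\tilde{g}(v)=\chi(g)v$ \emph{and} that the extension has determinant one (this uses the extra structure carried by moduli spaces over abelian surfaces, not $H^2$ alone), whence $1=\det(\tilde{g})=\det(g)\chi(g)$ --- exactly the computation that Lemma \ref{lem:moninv} of this paper runs in reverse.

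Two smaller points. In the second inclusion you call the Eichler transvections ``visibly monodromy operators'': they are visibly in $(\det\cdot\chi)^{-1}(1)$, but that they are monodromy is part of what the lower bound must establish (in the cited proof they arise from the autoequivalence-induced operators and from deformations of the abelian surface), so as written this step is circular. Finally, your ``main obstacle'' paragraph locates the difficulty in the bookkeeping of discriminant actions for the lower bound; once Eichler's criterion and the autoequivalence machinery are in place that part is comparatively routine, and the real difficulty of the theorem is the upper bound discussed above.
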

The above theorem implies that all monodromy operators act as $\pm 1 $ on $A_X$.
\begin{rmk}\label{rem:discri}
{\em 
The discriminant group $A_X$ is generated by a single element of order $2n+2$ and square $\frac{-1}{2n+2}$. If $X=K_n(A)$, this is rather easy to see by using the previous embedding $H^2(X,\mathbb{Z})\rightarrow \tilde{\Lambda}$, whose orthogonal is generated by the Mukai vector $(1,0,-n-1)$ and the class $e_n$ is sent to $(1,0,n+1)$.
Furthermore, by \cite[Proposition 1.4.1]{Nik}, the discriminant group is isomorphic to the discriminant group of any unimodular complement, such as the rank one lattice generated by $v_n:=(1,0,-n-1)$, which has a single generator.

 Clearly, the element
$$
(\frac{1}{2n+2},0,1)
$$ 
has integral pairing with $H^2(K_n(A),\mathbb{Z})$ and pairing one with $e_n$, so its class is primitive in the discriminant group, hence it is a generator.  
}
\end{rmk}

\begin{rmk}\label{remark:projective-def-classes}
{\em As proven in \cite{marmeh}, holomorphic symplectic manifolds of the form $K_n(A)$ are dense in their own deformation space and the same holds for polarized $K_n(A)$ inside their space of polarized deformations, see \cite{MP2}.}
\end{rmk}

%
\section{Characterization of parallel transport operators}\label{sec:mon}
%
The aim of the present section is to prove Theorem \ref{thm:parallel}. Its proof consists in two steps: we first define a function as in the theorem and we prove that this function is a monodromy invariant, see Lemma \ref{lem:moninv}. Then, we prove that this function is easily computable on moduli spaces of sheaves on abelian surfaces (see Lemma \ref{lem:cambioA}), which will be used in our applications. We start by recalling the following. 

\begin{defn}\label{def:op} Let $(X_1,H_1)$ and $(X_2,H_2)$ be two polarized irreducible holomorphic symplectic varieties. 
\begin{itemize}
\item[(i)] A family $(\mathcal X, \mathcal H) \to B$ of polarized irreducible holomorphic symplectic varieties is a smooth and proper family $\mathcal X \to B$ of irreducible  holomorphic symplectic varieties together with a line bundle $\mathcal H$ on $\mathcal X$ which restricts to an ample line bundle $H_b$ on $X_b$ for every $b\in B$.

\item[(ii)]
The (polarized) irreducible holomorphic symplectic varieties $(X_1,H_1)$ and $(X_2,H_2)$
are (polarized) deformation equivalent if there exists a  family $(\mathcal X, \mathcal H) \to B$ of (polarized) irreducible holomorphic symplectic varieties and two points $b_1,b_2\in B$ such that 
$(X_1,H_1)=(X_{b_1}, H_{b_1})$ and $(X_2,H_2)=(X_{b_2}, H_{b_2})$.

\item[(iii)] An isomorphism $f:H^2(X_1,\mathbb Z)\to H^2(X_2,\mathbb Z)$ between the second cohomology groups of two (polarized) irreducible holomorphic symplectic varieties which are deformation equivalent is a (polarized) parallel transport operator if  there exists a smooth and proper family $\pi: \mathcal X \to B$ of  (polarized) irreducible holomorphic symplectic varieties and a continuous path $\gamma: [0,1]\to B$ satisfying $\gamma(0)=b_1, \gamma(1)=b_2$ such that the isomorphism $f$ is induced by the parallel transport in the local system $R\pi_\star \mathbb Z$ (and $f(H_1)=f(H_2)$).

\end{itemize}
\end{defn}

We recall the definition of faithful monodromy invariant from \cite{Markman09}. Let $I(X)$ be a subset of $H^2(X,\mathbb{Z})$ which is invariant under the action of $\Mon^2(X)$. Let $\Sigma$ be a set and let $f\,:\,I(X)\,\rightarrow\,\Sigma$ be a function. 
\begin{defn}
The function $f$ is a monodromy invariant if it is surjective and $f(h)=f(g(h))$ for all $g\in \Mon^2(X)$. If in addition the induced function $f$ on $I(X)/\Mon^2(X)$ is injective, then we call it a faithful monodromy invariant.
\end{defn}
For our purposes, we can take the above set $I(X)$ to be invariant under the action of the full isometry group $O(H^2(X,\mathbb{Z}))$. Given a family of smooth deformations of $X$, it is possible to extend a monodromy invariant $f$ by parallel transport to the whole family.  We keep denoting such an extension by $f$. We then have the following.
\begin{lem}[\cite{Markman09}, Lemma 5.17]\label{lem:mark}
Let $f\,:\,I(X)\,\rightarrow\,\Sigma$ be a faithful monodromy invariant function and let $(X_i,h_i)$ be two pairs such that $X_i$ is deformation equivalent to $X$ and $h_i\in I(X_i)$. Then the following holds:
\begin{itemize}
\item The pairs $(X_1,h_1)$ and $(X_2,h_2)$ are deformation equivalent (in the sense of 
\cite[Definition 5.6]{Markman09}) if and only if $f(h_1)=f(h_2)$.
\item Suppose $h_1=\langle c_1(L) \rangle$ for some $L\in Pic(X_1)$ and suppose $(L,\omega)>0$ for some K\"ahler class $\omega$. Then $f(h_1)=f(h_2)$ if and only if they are deformation equivalent and the deformation of $L$ is a line bundle with the same properties.
\end{itemize}
\end{lem}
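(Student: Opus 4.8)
The plan is to treat the two bullet points in turn. The first is a formal consequence of faithfulness together with the two structural properties of parallel transport operators, namely that they are closed under composition and inversion and that every element of $\Mon^2(X)$ is, by definition, realized by one; the second is an upgrade of the first from the purely lattice-theoretic setting to the algebraic/K\"ahler one. The starting observation is that the extension of $f$ by parallel transport is well defined \emph{precisely because} $f$ is a monodromy invariant: if $P,P'\colon H^2(X',\mathbb Z)\to H^2(X,\mathbb Z)$ are two parallel transport operators then $P'\circ P^{-1}\in\Mon^2(X)$, so $f(P(h'))=f((P'P^{-1})(P(h')))=f(P'(h'))$ for every $h'\in I(X')$, where we also use that $I$ is preserved by isometries so that $P(h')\in I(X)$.

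Granting this, the first bullet is immediate in both directions. For the forward implication, if $(X_1,h_1)$ and $(X_2,h_2)$ are deformation equivalent in the sense of \cite[Definition 5.6]{Markman09} then there is a parallel transport operator $g$ with $g(h_1)=h_2$; composing with a parallel transport operator $P_2\colon H^2(X_2,\mathbb Z)\to H^2(X,\mathbb Z)$ gives $f(h_1)=f(P_2g(h_1))=f(P_2(h_2))=f(h_2)$. For the converse I would choose parallel transport operators $P_i\colon H^2(X_i,\mathbb Z)\to H^2(X,\mathbb Z)$ and note $f(P_1(h_1))=f(h_1)=f(h_2)=f(P_2(h_2))$. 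Faithfulness of $f$ then forces $P_1(h_1)$ and $P_2(h_2)$ into the same $\Mon^2(X)$-orbit, so there is $g\in\Mon^2(X)$ with $g(P_1(h_1))=P_2(h_2)$. Since $g$ is itself a parallel transport operator of $X$ to itself and such operators are closed under composition and inversion, $P_2^{-1}\circ g\circ P_1\colon H^2(X_1,\mathbb Z)\to H^2(X_2,\mathbb Z)$ is a parallel transport operator sending $h_1$ to $h_2$, which is exactly the required deformation equivalence of pairs.

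For the second bullet, the `if' direction is immediate from the forward implication of the first bullet. The substantive content is the `only if' direction: from $f(h_1)=f(h_2)$ the first bullet already yields a parallel transport operator matching the classes, and the task is to realize this transport along a family on which the class stays of Hodge type $(1,1)$ and positive, so that $L$ genuinely deforms to a line bundle $L_2$ on $X_2$ with $(L_2,\omega_2)>0$ for some K\"ahler class $\omega_2$. The idea is to run the deformation inside the sublocus of the period domain on which $h$ remains of type $(1,1)$: writing $\Omega_h$ for the periods orthogonal to $h$, this is the period domain attached to the lattice $h^\perp$, and when $q(h)>0$ — which holds in the cases of interest, e.g.\ when $L$ is ample — this lattice has signature $(2,\cdot)$, so $\Omega_h$ is connected and, by surjectivity of the period map, is filled by manifolds of $K_n(A)$-type carrying $h$ as a $(1,1)$-class. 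One can therefore connect $(X_1,h_1)$ to $(X_2,h_2)$ within this locus; along the resulting family $\tilde h$ is an integral $(1,1)$-class on every fibre, so by the Lefschetz $(1,1)$ theorem it is the first Chern class of a line bundle on each fibre, and in particular $h_2=c_1(L_2)$. That positivity survives I would deduce from orientation: the positive cone in $H^{1,1}(X_i,\mathbb R)$ has two components, the choice of component being an orientation datum preserved by every element of $\Mon^2(X)\subset O^+$ and hence by every (polarized) parallel transport operator; since $(L,\omega)>0$ places $c_1(L)$ in the component containing the K\"ahler classes and $q$ is an isometry invariant, the deformed class $c_1(L_2)$ lands in the corresponding component on $X_2$ and so satisfies $(L_2,\omega_2)>0$. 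I expect the main obstacle to be exactly this second bullet, i.e.\ the bookkeeping that upgrades the abstract lattice-level deformation equivalence of the first bullet to a genuine family keeping the polarizing class algebraic and positive — the connectedness of $\Omega_h$ together with the orientation argument — rather than the essentially formal manipulation of parallel transport operators in the first bullet.
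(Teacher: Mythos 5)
This lemma is not proved in the paper at all: it is imported verbatim from \cite[Lemma 5.17]{Markman09}, so there is no internal proof to compare against, and your argument has to be judged on its own merits. Your first bullet is correct and is the expected formal argument: the extension of $f$ by parallel transport is well defined because any two parallel transport operators $P,P'$ differ by $P'P^{-1}\in\Mon^2(X)$, the forward implication follows by composing with a parallel transport operator to the reference manifold $X$, and the converse follows from faithfulness together with the fact that $P_2^{-1}\circ g\circ P_1$ is again a parallel transport operator. No objection there.

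The second bullet, however, contains genuine gaps, precisely where you anticipated the difficulty. First, a factual error: for $q(h)>0$ the lattice $h^\perp$ has signature $(2,b_2-3)$, and the associated period domain $\Omega_{h^\perp}$ is \emph{not} connected --- it has exactly two connected components (a type IV Hermitian symmetric domain and its complex conjugate); connectedness holds in the opposite case $q(h)<0$, of signature $(3,b_2-4)$, which is in fact the case of interest in Markman's original paper (monodromy-reflective classes of negative square) and which your restriction ``$q(h)>0$, which holds in the cases of interest'' excludes --- so your argument also does not cover the stated generality of the lemma, only the application made of it here. Second, and more seriously: even with the correct component and orientation bookkeeping, what the argument needs is not surjectivity of the period map but \emph{connectedness of the Hodge locus inside the moduli space of marked pairs}, i.e.\ of the locus of $(X,\eta)$ in a fixed connected component of the moduli space such that $\eta^{-1}(h)$ is of type $(1,1)$ and pairs positively with a K\"ahler class. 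Surjectivity of the period map onto a component of $\Omega_{h^\perp}$ produces \emph{some} manifold over each period point, but it does not connect $(X_1,\eta_1)$ to $(X_2,\eta_1\circ g^{-1})$ by a path in that locus: a priori the preimage of the component of $\Omega_{h^\perp}$ could be disconnected, with the two marked pairs in different pieces. Ruling this out is exactly the nontrivial content of the statement, and it requires Verbitsky's global Torelli theorem together with Markman's analysis of the moduli space of marked pairs (cf.\ \cite[Section 7]{Markman10}, where one shows that over a generic period point in $\Omega_{h^\perp}$, whose Picard group is cyclic, the fiber of the restricted period map is a single point, and deduces connectedness of the positive Hodge locus). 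Your proposal never invokes global Torelli or any substitute for it, so the step ``one can therefore connect $(X_1,h_1)$ to $(X_2,h_2)$ within this locus'' is unsupported; the orientation argument for preservation of positivity is fine, but only once that connectedness is in place.
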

We now proceed to define a faithful monodromy invariant function and we compute it on a dense subset of the moduli space of generalized Kummer varieties, namely for all those manifolds who are moduli spaces of stable objects in the derived category of an abelian surface. See Remark \ref{oss:embedding} for a discussion on what is needed to compute this invariant on an arbitrary deformation.\\
Let $v=(r,D,s)\in H^{2*}(A,\mathbb{Z})$ be a primitive Mukai vector and let $X=M_{v,\sigma}(A)$ be a smooth moduli space of stable objects of Mukai vector $v$ in the derived category of an abelian surface $A$, as defined in \cite{yos12}. Let $I(X)$ be an isometry invariant subset of $H^2(X,\mathbb{Z})\cong v^\perp\subset H^{2*}(A,\mathbb{Z})$. Let $F_{v,X}$ be the function which sends an element  $w\in I(X)$ to the saturation $T$ of the lattice containing $w$ and $v$. Let $\Gamma_v$ be the group of isometries of $H^{2*}(A,\mathbb{Z})$ which fix $v$, preserve the orientation on $v^\perp$ and have determinant $1$. Let $\Sigma_{v,X}$ be the quotient of $F_{v,X}(I(X))$ by $\Gamma_{v}$. Let $f_{v,X}\,:\,I(X)\,\rightarrow\,\Sigma_{v,X}$ be the function induced by $F_{v,X}$.
\begin{lem}\label{lem:moninv}
The function $f_{v,X}$ is a faithful monodromy invariant.
\end{lem}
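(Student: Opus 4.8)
The plan is to show two things for the function $f_{v,X}\colon I(X)\to\Sigma_{v,X}$: that it is a monodromy invariant, and that the induced map on $I(X)/\Mon^2(X)$ is injective. The key identification underlying both is that the Mukai lattice $H^{2*}(A,\mathbb Z)\cong U^4$ contains (at least) two copies of the hyperbolic lattice $U$, so that Eichler's criterion (Lemma \ref{lem:eichler}) is available both for $H^{2*}(A,\mathbb Z)$ itself and, crucially, for the orthogonal complement $v^\perp\cong H^2(X,\mathbb Z)$, which still contains two copies of $U$ by the structure $U^3\oplus(-2n-2)$. First I would make precise the relation between $\Mon^2(X)$ acting on $v^\perp$ and the stabilizer group $\Gamma_v$ of $v$ inside $O(H^{2*}(A,\mathbb Z))$: by the description of the monodromy group via Theorem \ref{thm:mon}, an isometry of $v^\perp$ lies in $\Mon^2(X)$ precisely when it extends to an isometry of the Mukai lattice fixing $v$, preserving orientation on $v^\perp$, and of the prescribed determinant/discriminant type. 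This lets me transfer the $\Gamma_v$-action to a $\Mon^2(X)$-action compatibly.

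For the invariance, I would argue that for $w\in I(X)$ and $g\in\Mon^2(X)$, the saturation $T$ of the lattice $\langle v,w\rangle$ spanned by $v$ and $w$ gets carried by (the Mukai-lattice extension $\tilde g$ of) $g$ to the saturation of $\langle \tilde g(v),\tilde g(w)\rangle=\langle v, g(w)\rangle$, simply because saturation commutes with isometries. Hence $F_{v,X}(g(w))$ and $F_{v,X}(w)$ differ by an element of $\Gamma_v$, so they coincide in the quotient $\Sigma_{v,X}$; this gives $f_{v,X}(g(w))=f_{v,X}(w)$. Surjectivity is automatic since $\Sigma_{v,X}$ is \emph{defined} as the image $F_{v,X}(I(X))$ modulo $\Gamma_v$. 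The slightly delicate point here is bookkeeping the orientation and determinant conditions so that the extension $\tilde g$ really lands in $\Gamma_v$ and not merely in the full stabilizer of $v$.

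For faithfulness, which I expect to be the main obstacle, I would prove the converse: if $w_1,w_2\in I(X)$ satisfy $f_{v,X}(w_1)=f_{v,X}(w_2)$, then there is $g\in\Mon^2(X)$ with $g(w_1)=w_2$. Equality in $\Sigma_{v,X}$ means there is $\psi\in\Gamma_v$ carrying the saturated lattice $T_1=F_{v,X}(w_1)$ to $T_2=F_{v,X}(w_2)$; composing with $\psi$ I may reduce to the case $T_1=T_2=:T$, where both $w_1$ and $w_2$ generate the same rank-two saturated sublattice together with $v$. Now I must produce an isometry of the Mukai lattice fixing $v$ and sending $w_1$ to $w_2$; the natural tool is again Eichler's criterion, applied to the orthogonal complement $v^\perp$ (which contains $U^2$) with $w_1,w_2$ viewed as primitive vectors there. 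Equality of the saturations $T$ forces $w_1^2=w_2^2$ and, after a computation with divisibilities, equality of the discriminant classes $[w_i/\mathrm{div}(w_i)]$ in $A_{v^\perp}$; this is exactly the input Lemma \ref{lem:eichler} needs to produce an isometry $g\in\tilde O^+(v^\perp)$ with $g(w_1)=w_2$ and $\det g=1$.

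The final step is to verify that the isometry $g$ so produced genuinely lies in $\Mon^2(X)$. By Theorem \ref{thm:mon} this amounts to checking $g$ satisfies $(\det\cdot\chi)(g)=1$, i.e. that its determinant matches its action on the discriminant group $A_X$; since Eichler's criterion delivers $g$ as a product of transvections of determinant one acting trivially on the discriminant form, this condition should hold automatically, and where it does not I can correct by an auxiliary reflection fixing $v$ and $w_2$. The hard part will be controlling precisely how the discriminant data of the rank-two saturation $T$ determines the discriminant class of each $w_i$ in $A_{v^\perp}$, since $v$ need not be primitive-of-divisibility-one in $T$; I expect to need the interplay between $\mathrm{div}(w_i)$ computed in $v^\perp$ versus in $T$, and Remark \ref{rem:discri} identifying $A_X$ with a cyclic group of order $2n+2$ to pin down the comparison.
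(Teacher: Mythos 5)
Your invariance argument coincides with the paper's proof: the paper's entire printed proof of Lemma \ref{lem:moninv} is exactly the extension you describe (set $\tilde g=g$ on $v^\perp\cong H^2(X,\Z)$ and $\tilde g(v)=\chi(g)v$, use $\det(g)\cdot\chi(g)=1$ from Theorem \ref{thm:mon} to place $\tilde g$ in $\Gamma_v$, and note that saturation commutes with isometries), with surjectivity true by the definition of $\Sigma_{v,X}$. Two remarks on this half. First, your claim that an isometry of $v^\perp$ is monodromy \emph{precisely when} it extends to the Mukai lattice \emph{fixing} $v$ is not right: an operator with $\chi(g)=-1$ (e.g.\ the reflection $r_{\delta_n}$) extends only with $\tilde g(v)=-v$; the paper has the same wrinkle when it asserts $\tilde g\in\Gamma_v$, and it is harmless only because $\langle -v,g(w)\rangle=\langle v,g(w)\rangle$ as lattices, but it should be flagged. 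Second, the paper stops after invariance and never writes out faithfulness, so that part of your proposal goes beyond the printed proof --- and that is where the problem lies.

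Your faithfulness argument has a genuine gap. Faithfulness requires restricting $I(X)$ to primitive classes (otherwise $u$ and $2u$ have the same saturation with $v$, hence the same invariant, but lie in different orbits). Granting that, once you reduce to $T_1=T_2=T$, the lattice $T\cap v^\perp$ has rank one and is saturated in $v^\perp$, so primitivity forces $w_1=\pm w_2$ outright; Eichler's criterion is not needed in the $+$ case, and in the only nontrivial case, $w_1=-w_2$, your key claim fails: the discriminant classes are \emph{opposite}, $[w_1/\div(w_1)]=-[w_2/\div(w_2)]$, and since $A_X\cong \Z/(2n+2)\Z$ these agree only when the class is $2$-torsion. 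So Lemma \ref{lem:eichler} cannot be invoked as you state. Your proposed repair (an auxiliary reflection fixing $v$ and $w_2$) addresses the wrong issue: the Eichler isometry, when it exists, lies in $\tilde O^+(v^\perp)$ with determinant one and hence is automatically monodromy by Theorem \ref{thm:mon}; what fails is the hypothesis of Eichler's criterion itself. The correct fix is to compose with a monodromy operator acting as $-1$ on $A_X$, e.g.\ $r_{\delta_n}$ (used for exactly this purpose in the proof of Lemma \ref{lem:bounding-coefficients}): since $[r_{\delta_n}(w_2)/\div(w_2)]=-[w_2/\div(w_2)]=[w_1/\div(w_1)]$, Eichler's criterion yields $g_1\in\Mon^2(X)$ with $g_1(w_1)=r_{\delta_n}(w_2)$, and then $r_{\delta_n}\circ g_1\in\Mon^2(X)$ sends $w_1$ to $w_2$.
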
 
\begin{proof}
Let $g\in \Mon^2(X)$. Let $\chi$ be the characther of its action on the discriminant group $A_X$. Recall that by Theorem \ref{thm:mon} we have $\chi=\pm 1$. Therefore, $g$ can be extended to an isometry $\tilde{g}$ of $H^{2*}(A,\mathbb{Z})$ as follows:
\begin{align}\nonumber
\tilde{g}(t) &:= g(t)\,\,\,\text{for } t\in H^2(X,\mathbb{Z})\cong v^\perp \\ \nonumber
\tilde{g}(v) &:= \chi(g) v.
\end{align}
As, again by Theorem \ref{thm:mon}, we have $det(g)\cdot \chi(g)=1$, then the isometry $\tilde{g}$ lies in $\Gamma_{v}$. Thus, $F_{v,X}(g(T))=\tilde{g}(F_{v,X}(T))$, i.e. $f_{v,X}(T)=f_{v,X}(g(T))$.
\end{proof}
\begin{rmk}\label{rmk:1}
{\em Notice that if $X$ and $X'$ are deformation equivalent irreducible holomorphic symplectic varieties of $K_n(A)$-type and $f_X$ is a monodromy invariant function for $X$, then the extension via parallel transport of $f_X$ to $X'$ is a monodromy invariant function for $X'$.} 
\end{rmk}

\begin{lem}\label{lem:cambiov}
Let $M_v(A)\cong X \cong M_w(A')$. Then there is a canonical bijection between $\Sigma_{v,X}$ and $\Sigma_{w,X}$ which identifies $f_{v,X}$ and $f_{w,X}$ 
\end{lem}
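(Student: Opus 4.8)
Let me understand what's being claimed. We have an irreducible holomorphic symplectic variety $X$ realized in two different ways as a moduli space of sheaves (or stable objects) on abelian surfaces: $X \cong M_v(A)$ and $X \cong M_w(A')$, with Mukai vectors $v \in H^{2*}(A,\mathbb{Z})$ and $w \in H^{2*}(A',\mathbb{Z})$. Each presentation gives a monodromy invariant function $f_{v,X}: I(X) \to \Sigma_{v,X}$ and $f_{w,X}: I(X) \to \Sigma_{w,X}$ via the recipe of saturating the lattice spanned by a class and the Mukai vector, then quotienting by $\Gamma_v$ (resp. $\Gamma_w$). The claim is a compatibility statement: these two invariants are essentially the same, via a canonical bijection between target sets.

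The plan is to exploit the fact that both presentations induce a canonical Hodge isometry $H^2(X,\mathbb{Z}) \cong v^\perp \subset H^{2*}(A,\mathbb{Z})$ and $H^2(X,\mathbb{Z}) \cong w^\perp \subset H^{2*}(A',\mathbb{Z})$, both coming from Yoshioka's construction (as cited via \cite{yos12}). Composing these gives a canonical isometry $v^\perp \cong w^\perp$. The key observation is that this isometry between the orthogonal complements extends to an isometry $\Phi$ of the full Mukai lattices $H^{2*}(A,\mathbb{Z}) \to H^{2*}(A',\mathbb{Z})$ sending $v$ to $w$: this is because both are copies of the same abstract lattice $\tilde\Lambda \cong U^4$, the complements $v^\perp, w^\perp$ are isometric (both being $H^2(X,\mathbb{Z})$), and $v, w$ are primitive vectors of the same square with matching discriminant data (since $v^2 = w^2 = \dim$-related invariant and the discriminant group of $v^\perp$ matches that of $X$). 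Here I would invoke Eichler's criterion (Lemma \ref{lem:eichler}) to produce such a $\Phi$ carrying $v$ to $w$.

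First I would fix the identification $H^2(X,\mathbb{Z}) \cong v^\perp$ and $H^2(X,\mathbb{Z}) \cong w^\perp$ coming from the two moduli descriptions, and let $\psi: v^\perp \to w^\perp$ be the resulting isometry. Next, using that $v^2 = w^2$ and that $[v/\div(v)] = [w/\div(w)]$ in the discriminant group (which follows because both equal the canonical generator of $A_X$ described in Remark \ref{rem:discri}), I would apply Eichler's criterion to obtain $\Phi \in \tilde{O}^+(H^{2*})$ with $\Phi(v) = w$, after checking $\Phi|_{v^\perp}$ and $\psi$ differ by an element of $\Gamma_v$. The bijection $\Sigma_{v,X} \to \Sigma_{w,X}$ is then induced by $T \mapsto \Phi(T)$ on saturated lattices: because $\Phi$ carries $v$ to $w$ and restricts to an isometry $v^\perp \to w^\perp$ compatible with $\psi$, it sends the saturation of $\langle w', v\rangle$ to the saturation of $\langle \psi(w'), w\rangle$, and descends to the quotients by $\Gamma_v$ and $\Gamma_w$ (which $\Phi$ intertwines by conjugation). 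That the resulting map on quotients is well-defined and a bijection, and that it identifies $f_{v,X}$ with $f_{w,X}$, then follows by unwinding definitions.

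The main obstacle I expect is verifying that the discriminant data genuinely match so that Eichler's criterion applies, and more delicately, confirming that the isometry $\psi$ coming from the two geometric presentations agrees, up to $\Gamma_v$, with the restriction of a global isometry $\Phi$ fixing the Mukai-vector data. In other words, the content is not merely lattice-theoretic existence of some $\Phi$, but compatibility of $\Phi$ with the canonical Hodge-theoretic identifications; one must ensure $\psi$ is orientation-preserving and of the correct determinant so that it lands in the $\Gamma_v$-orbit, making the induced map on quotients independent of the choice of $\Phi$. Once this compatibility is pinned down, the canonicity of the bijection and the identification of the two invariants are formal.
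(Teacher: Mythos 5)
Your proposal correctly isolates where the difficulty lies --- matching the abstract lattice isometry $\Phi$ produced by Eichler's criterion with the geometric identification $\psi\colon v^\perp\to w^\perp$ coming from the two moduli descriptions --- but it never overcomes that difficulty, and the tools you allow yourself cannot. Eichler's criterion does give \emph{some} isometry $\Phi\colon H^{2*}(A,\Z)\to H^{2*}(A',\Z)$ with $\Phi(v)=w$ (trivially so: the Mukai lattice is unimodular, so its discriminant group is trivial, the class condition you invoke via Remark \ref{rem:discri} is vacuous --- that remark concerns $A_X$, not the Mukai lattice --- and $v^2=w^2$ suffices). The real issue is the discrepancy $g:=\psi\circ(\Phi|_{v^\perp})^{-1}\in O(w^\perp)$. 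Any two choices of $\Phi$ carrying $v$ to $w$ differ by an isometry of $H^{2*}(A,\Z)$ fixing $v$, and every such isometry acts \emph{trivially} on the discriminant group $A_{v^\perp}$ (its action there is glued, inside the unimodular overlattice, to the trivial action on $\Z v$). Consequently the induced action of $g$ on $A_{w^\perp}$ does not depend on the choice of $\Phi$: it is an invariant of $\psi$ alone, and no amount of orientation or determinant bookkeeping, nor composing with elements of $\Gamma_v$, can change it. Your construction goes through only if this action is $\pm 1$, i.e.\ only if $\psi$ itself extends to an isometry of the Mukai lattices sending $v$ to $\pm w$; and this is not automatic, since $O(w^\perp)\to O(A_{w^\perp})$ is surjective (Nikulin) while $A_{w^\perp}\cong \Z/(2n+2)\Z$ admits automorphisms of its quadratic form other than $\pm 1$ for many $n$ (e.g.\ multiplication by $7$ when $2n+2=24$). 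So there exist isometries $v^\perp\to w^\perp$ to which no compatible $\Phi$ exists, and nothing in your argument excludes that $\psi$ is one of them.

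The missing ingredient is geometric, and it is exactly what the paper's proof uses: by Yoshioka (\cite[Proposition 3.15]{yos12}, see also \cite[Proposition 2.4]{MW}), an isomorphism $M_v(A)\cong M_w(A')$ forces $A$ and $A'$ to be derived equivalent, and the equivalence induces a \emph{Hodge} isometry $i_{v,w}\colon H^{2*}(A,\Z)\to H^{2*}(A',\Z)$ with $i_{v,w}(v)=w$ whose restriction to $v^\perp$ is precisely the identification $\psi$ coming from $v^\perp\cong H^2(M_v(A))=H^2(M_w(A'))\cong w^\perp$. Once such an $i_{v,w}$ is available, the compatibilities $i_{v,w}\Gamma_v=\Gamma_w i_{v,w}$ and $i_{v,w}(T_v(H))=T_w(i_{v,w}(H))$ are immediate, and the canonical bijection $\Sigma_{v,X}\to\Sigma_{w,X}$ identifying $f_{v,X}$ with $f_{w,X}$ follows formally, as you say. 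To repair your proof, replace the Eichler step by this derived-equivalence input; the lemma is not provable by lattice theory alone.
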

\begin{proof}
First of all, the abelian surfaces $A$ and $A'$ are derived equivalent by \cite[Proposition 3.15]{yos12}, see also \cite[Proposition 2.4]{MW}. Moreover, we have a Hodge isometry $i_{v,w}$ between $H^{2*}(A)$ and $H^{2*}(A')$ which sends $v$ in $w$ and thus restricts to an isometry between $v^\perp\cong H^2(M_v(A))$ and $w^\perp\cong H^2(M_w(A'))$.
The map $i_{v,w}$ satisfies the following:
\begin{align}
i_{v,w}\Gamma_v&=\Gamma_w i_{v,w}\\
i_{v,w}(T_v(H))&=T_w(i_{v,w}(H)),\,\,\forall\,H\in I(X) 
\end{align} 
Here, the first equation is just the identification $v^\perp\cong H^2(M_v(A))=H^2(M_w(A'))\cong w^\perp$, and the second equality is given by the Hodge isometry $H^{2*}(A)\cong H^{2*}(A')$.

Therefore, it induces a bijection between $\Sigma_v$ and $\Sigma_w$ which identifies $f_{v,X}$ and $f_{w,X}$.
\end{proof}
\begin{rmk}\label{rmk:2}
{\em The previous lemma shows that the monodromy invariant does not depend on the way we realize $X$ as a moduli space. }
\end{rmk}
\begin{lem}\label{lem:cambioA}
Let $X=M_v(A)$ and $Y=M_w(A')$ be deformation equivalent. Let $f$ be the extension by deformation of $f_{v,X}$. Then $f$ and $f_{w,Y}$ coincide on $I(Y)$.
\begin{proof}
By \cite[Section 5.3]{Markman09}, the function $f_{v,X}$ admits a unique extension which we call $f$.
Let $X'=M_v(B)$ be a deformation of $X$ obtained by deforming the underlying abelian surface $A$ to $B$. By continuity, the monodromy invariant $f$ gives $f_{v,X'}$ on $X'$. Let now $\gamma$ be a path in the deformation space of $X$ which connects $X$ and $Y$ and let us suppose that all points of $\gamma$ correspond to moduli spaces on abelian surfaces. This can be done, as moduli spaces are a connected dense subset of the total deformation space (see \cite[Proposition 2.31]{PR}). Therefore, the deformation $\gamma$ is a composition of deformations of the underlying surfaces with isomorphisms as in Lemma \ref{lem:cambiov}. Both operations identify the global function $f$ with the local function $f_{v',Z}$ for all $Z\in \gamma$, therefore our claim follows.   
\end{proof}
\end{lem}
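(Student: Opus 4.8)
The plan is to exhibit $f$ as the \emph{unique} parallel-transport extension of $f_{v,X}$, guaranteed by \cite[Section 5.3]{Markman09} together with the fact (Lemma \ref{lem:moninv}) that $f_{v,X}$ is a faithful monodromy invariant, and then to pin this global object to the intrinsic function $f_{w,Y}$ at the point $Y$ by travelling along a carefully chosen path in the deformation space. The two functions are a priori of different nature: $f$ is transported abstractly from $X$ and remembers nothing of a moduli presentation, whereas $f_{w,Y}$ is read off from the structure $Y=M_w(A')$ using the Mukai lattice of $A'$. The entire content of the statement is that they nonetheless agree on $I(Y)$.

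First I would reduce the problem to two elementary moves. Using the density and connectedness of the locus of moduli spaces over abelian surfaces inside the full deformation space (\cite[Proposition 2.31]{PR}, cf. Remark \ref{remark:projective-def-classes}), I would choose a path $\gamma$ joining $X$ to $Y$ all of whose points are of the form $M_{v'}(B)$. After refining $\gamma$, each segment is of one of two types: (a) a deformation of the underlying abelian surface from $A$ to $B$ with the Mukai vector $v'$ held fixed; (b) a re-presentation of a fixed variety $Z\in\gamma$ as two different moduli spaces over (possibly different, derived-equivalent) abelian surfaces.

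For a move of type (a) I would argue that the local function is literally unchanged: deforming only the surface keeps the integral Mukai lattice and the vector $v'$ constant as local systems, only the Hodge structure varying, so the saturation of the lattice spanned by a class and $v'$ that defines $F_{v',-}$ is computed from exactly the same integral data at both ends. Hence $f$, being locally constant along such a family, coincides with the intrinsic $f_{v',M_{v'}(B)}$. For a move of type (b) the agreement of the two intrinsic functions at $Z$ is precisely Lemma \ref{lem:cambiov}. Concatenating the segments, the identity ``$f$ equals the local function $f_{v',Z}$'' propagates along the whole of $\gamma$, and evaluating at the endpoint yields $f|_{I(Y)}=f_{w,Y}$, as desired.

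The step I expect to be the main obstacle is the reduction itself: producing a path entirely inside the moduli-space locus and verifying that its decomposition into the moves (a) and (b) is exhaustive, so that no segment escapes one of the two compatibilities above. This rests on the density and connectedness input of \cite{PR} and on the fact that any two moduli presentations encountered along $\gamma$ are related by a derived equivalence of the underlying surfaces, as required by Lemma \ref{lem:cambiov}. Once these are in place, the conclusion is a routine propagation of the identity $f=f_{v',Z}$ along $\gamma$.
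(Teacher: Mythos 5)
Your proposal is correct and follows essentially the same route as the paper's own proof: uniqueness of the extension from \cite[Section 5.3]{Markman09}, a path through the connected dense locus of moduli spaces via \cite[Proposition 2.31]{PR}, and propagation of the identity between $f$ and the local invariants along that path, using continuity for deformations of the underlying abelian surface and Lemma \ref{lem:cambiov} for changes of moduli presentation. Your elaboration of the type (a) move (constancy of the integral Mukai lattice and of $v'$ as local systems) simply makes explicit what the paper compresses into ``by continuity.''
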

\begin{rmk}\label{rmk:3}
{\em If $V$ is an irreducible holomorphic symplectic variety deformation of a smooth moduli space $M_v(A)$ (of stable objects on an abelian surface), then 
the previous lemma shows that the monodromy invariant of $V$ defined via parallel transport by the  monodromy invariant of $M_v(A)$ 
does not depend on the choice of the moduli space. We therefore denote the above function simply with $f$.}
\end{rmk}

\begin{proof}[Proof of Theorem \ref{thm:parallel}]
Let $(X_1,H_1)$ and $(X_2,H_2)$ be two polarized irreducible holomorphic symplectic varieties of $K_n(A)$-type. Suppose, for $i=1,2$, that $(X_i,H_i)$ is polarized deformation equivalent to $(M_{v_i}(A_i), L_i)$, for some smooth and polarized moduli space $(M_{v_i}(A_i), L_i)$. 
By Lemmas \ref{lem:mark} and \ref{lem:moninv} the theorem holds for $(M_{v_1}(A_1), L_1)$ and $(M_{v_2}(A_2), L_2)$. By Lemma \ref{lem:mark} and Remark \ref{rmk:1} the theorem holds for $(X_1,H_1)$ and $(X_2,H_2)$. By Remarks \ref{rmk:2} and  \ref{rmk:3}
there is no dependence on the choices made.  
\end{proof}

\begin{rmk}\label{oss:embedding}
{\em The definition of the above function $f$ on a generic Kummer manifold $X$ requires a canonical choice of an embedding $H^2(X)\rightarrow \Lambda\cong U^4$ up to an isomorphism of $\Lambda$ of determinant one. Such an embedding is given by Mukai's isomorphism for moduli spaces and it is widely expected to be topologically defined, as in the case of manifolds of $K3^{[n]}$ type (cf. \cite[Thm. 9.3]{Markman10}). However, when $n+1$ is a prime power, such an embedding is uniquely determined as all isometries of $H^2(X,\mathbb{Z})$ can be extended to isometries of $\Lambda$ and elements of $\Mon^2(X)$ extend to discriminant one isometries, as in Lemma \ref{lem:moninv}. We have been informed by B. Wieneck that he has obtained such an embedding.}
\end{rmk}

%
\section{Applications}
%
In this section we prove Theorem \ref{thm:existence}. 
\subsection{Explicit polarized deformation equivalence}
Let $h$ be a primitive Hodge class in $H^2(K_n(A), \Z)$, and let $T(h)$ be the saturation in $H^2(K_n(A), \Z)$ of the lattice spanned by $h$ and $v_n=(1,0,-n-1)$. Let $t$ be the \emph{divisibility} of $h$, i.e. the positive integer such that 
$$h \cdot H^2(K_n(A), \Z)=t\Z.$$
Write $h^2=2d$. First we have the following. 

\begin{lem}\label{lem:bounding-coefficients}
There exists an abelian surface $A'$, a primitive polarization $h_{A'}$ on $A'$ and an integer $\beta\in\{0, \ldots, n+1\}$ such that, defining 
$$h'=t h_{A'}-\frac{\beta}{m}\delta\in H^2(K_n(A'), \Z)=H^2(A', \Z)\oplus\Z\delta_n,$$
where $m=gcd(2n+2,\beta)$, the monodromy invariants of $h$ and $h'$ are the same.
\end{lem}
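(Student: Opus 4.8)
The goal is to show that, given a primitive Hodge class $h\in H^2(K_n(A),\Z)$ with divisibility $t$ and $h^2=2d$, we can find another generalized Kummer variety $K_n(A')$ together with an explicit class $h'$ of the prescribed shape $h'=t\,h_{A'}-\frac{\beta}{m}\delta$ whose monodromy invariant agrees with that of $h$. By Theorem \ref{thm:parallel} (or directly by Lemma \ref{lem:moninv}), the monodromy invariant $f$ is faithful, so two classes have the same invariant exactly when they lie in the same orbit under the relevant isometry group. Hence the plan is to reduce the problem to an orbit computation and then to realize a convenient orbit representative as a class of the advertised form on a moduli space of sheaves.

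Let me sketch the steps. First I would make the monodromy invariant completely explicit: since $f_{v_n,X}$ sends $h$ to the $\Gamma_{v_n}$-orbit of the saturated rank-two sublattice $T(h)=\langle h,v_n\rangle^{\mathrm{sat}}$ inside $\tilde\Lambda\cong U^4$, computing $f(h)$ amounts to computing two discrete invariants of this rank-two lattice together with its embedding: its isomorphism class (governed by $h^2=2d$, the fixed $v_n^2=-2(n+1)$, and the pairing $h\cdot v_n$) and the class $[h/\operatorname{div}(h)]$ in the discriminant group, where here the relevant divisibility and discriminant data are those of $h$ inside $H^2(K_n(A),\Z)\cong v_n^\perp$. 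By Remark \ref{rem:discri} the discriminant group $A_X$ is cyclic of order $2n+2$, so this class is encoded by a single residue modulo $2n+2$, and Eichler's criterion (Lemma \ref{lem:eichler}, applicable since $\tilde\Lambda=U^4$ contains two hyperbolic planes) tells us that the $\tilde O^+$-orbit of $h$ inside $v_n^\perp$ is determined precisely by the pair $(h^2,[h/\operatorname{div}(h)])$. Thus the invariant $f(h)$ is captured by the numerical data $(d,t,\text{a residue mod }2n+2)$.

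Second, I would write down the candidate $h'$ and compute the same invariants for it. On $K_n(A')$ one has the orthogonal decomposition $H^2(K_n(A'),\Z)=H^2(A',\Z)\oplus\Z e_{n}$ with $q(e_n)=-2(n+1)$ and $\delta_n=2e_n$; taking $h_{A'}$ a primitive polarization with $h_{A'}^2=2d'$ for a suitable $d'$, a direct calculation gives $q(h')=t^2(2d')-\tfrac{\beta^2}{m^2}q(\delta)$, and one reads off $\operatorname{div}(h')$ and the discriminant class $[h'/\operatorname{div}(h')]$ from the coefficients $t$ and $\beta/m$ against the generator of $A_X$ described in Remark \ref{rem:discri}. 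The task is then purely arithmetic: choose $\beta\in\{0,\dots,n+1\}$ and $d'$ (equivalently a polarization $h_{A'}$, whose existence for the required self-intersection on \emph{some} abelian surface $A'$ is guaranteed by the density/existence of polarized abelian surfaces) so that $q(h')=2d$, $\operatorname{div}(h')=t$, and the residue matches. The restriction $\beta\le n+1$ is natural because the discriminant class only matters modulo $2n+2$ and $\beta$ and $2n+2-\beta$ give the same (up to sign) class, so every residue is reachable within this range.

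The main obstacle I anticipate is the bookkeeping in this arithmetic matching, specifically verifying that for \emph{every} admissible triple $(d,t,\text{residue})$ arising from an actual primitive class $h$ one can simultaneously solve for an integer $\beta\in\{0,\dots,n+1\}$, the auxiliary $m=\gcd(2n+2,\beta)$, and a genuinely realizable polarization self-intersection $2d'$ on an abelian surface, while keeping $h_{A'}$ primitive and $h'$ of the correct divisibility. The compatibility constraints coming from the divisibility $t$ (which forces congruence conditions on $d$ and on the discriminant class, since $t$ must divide the pairing of $h$ with a generator) are exactly what pins down the allowed $\beta$; I would expect to invoke a congruence/solvability argument, and the cleanest route is probably to fix the target orbit invariants first and then argue existence of $(A',h_{A'},\beta)$ by an explicit construction, using that any even positive value is attained by a primitive polarization on a suitable abelian surface. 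Once the numerical identity of invariants is established, faithfulness of $f$ finishes the proof with no further geometry.
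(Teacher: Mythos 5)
Your plan follows essentially the same route as the paper's proof: reduce the monodromy invariant to the numerical data $(q(h),\, div(h),\, \pm[h/div(h)])$ via Eichler's criterion together with the characterization of $\Mon^2$, then arithmetically choose $\beta$ and a polarization degree $2d'$ realizable on an abelian surface $A'$ with $U\subset NS(A')$, and finally use a sign-flipping monodromy operator (the paper uses the reflection $r_{\delta_n}$) to bring $\beta$ into $\{0,\ldots,n+1\}$. The one step you flag but do not carry out --- that the matching equation $h^2 = t^2(2d') - (\beta/m)^2\cdot 2(n+1)$ admits an integral solution $d'$ --- is precisely the divisibility check ($2t^2$ divides $h^2+(\beta/m)^2\cdot 2(n+1)$) that the paper verifies, and it does go through.
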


\begin{proof}
To prove the lemma, it suffices to find a polarized parallel transport operator which sends $h$ to $h'$, as the monodromy invariant will stay constant. First of all, let us deform $(A,h_A)$ to a primitively polarized abelian surface $(A',h_{A'})$ such that $U\subset NS(A')$. Notice that  $NS(A')$ contains elements having any (even) square. In this way all the choices that will be made below are possible. 


By abuse of notation, we keep denoting by $h$ the parallel transport of $h$ from $K_n(A)$ to $K_n(A')$.\\   
As a sublattice of $\widetilde \Lambda$, the cohomology group $H^2(K_n(A'), \Z)$ is spanned by $(1, 0, n+1)$ and by $H^2(A', \Z)$. Write 
$$h=(\mu, \lambda h_{A'}, \mu (n+1))$$
where $\lambda$ and $\mu$ are two integers and $h_{A'}$ is primitive. Since $h$ is primitive, $\lambda$ and $\mu$ are relatively prime. Notice that the divisibility of $h$ is 
$$t=\gcd(\lambda, 2n+2).$$

By Remark \ref{rem:discri}, the discriminant group $H^2(K_n(A'),\Z)^\vee/H^2(K_n(A'),\Z)$ is generated by the element $[(1/(2n+2),0,1)]$, which we will denote by $[1]$. 
 If we set $m=(2n+2)/t$, the image of $h/t$ in the discriminant group is therefore $[\mu\cdot m]$, which is of order $t$. Let $\beta$ be the smallest positive integer satisfying $\beta\equiv\mu\cdot m$ modulo $2n+2$. As $[\beta]$ has order $t$, we have that $gcd(\beta,2n+2)=m$.   
 
It is easy to check that $h^2+ (\beta/m)^2 \cdot 2(n+1)$ is divisible by $2t^2$, so if we write
$$
 h^2 = t^2 (2d') - (\beta/m)^2 \cdot 2(n+1)
$$
we choose a primitive polarization $h_{A'}$ on $A'$ of degree $2d'$ (which we can do as 
 $U\subset NS(A')$).
 
By Eichler's criterion \ref{lem:eichler}, we have an isometry $g\in \tilde{O}^+(H^2(K_n(A'),\mathbb{Z}))$ (so in particular $g$ acts trivially on the discriminant group) such that 
$$
g(h)=(\beta/m, th'_{A'},\beta(n+1)/m)=h',
$$ 
for some primitive and ample $h'_{A'}\in NS(A)$. Moreover, this isometry has determinant one. Hence it is a monodromy operator by Theorem \ref{thm:mon}. By definition, it then preserves the monodromy invariant $f$, i.e. 
$$
 f(h)=f(g(h))=f(h').
$$

Now we only need to prove that we can choose $0\leq \beta \leq n+1$, but this is easily done by replacing $h$ with $r_{\delta_n}(h)$, where $r_{\delta_n}$ is the reflection with respect to ${\delta_n}$. Indeed, $r_{\delta_n}$ is a monodromy operator which acts as $-1$ on the discriminant group, therefore if $\beta\geq n+1$, the corresponding coefficient for $r_{\delta_n}(h)$ is smaller than $n+1$. 
\end{proof}

Then, using Theorem \ref{thm:parallel}, proved in Section 3, we obtain the following.

\begin{thm}\label{thm:control-of-polarization}
Let $n>1$ be an integer, and let $(X, h)$ be a primitively polarized irreducible holomorphic symplectic variety of $K_n$-type. Then there exists an abelian surface $A$, a primitive polarization $h_A$ on $A$, an integer $t$, an integer $\beta\in \{0, \ldots, n+1\}$ and $m=gcd(\beta,2n+2)$ such that the pair $(X, h)$ is deformation-equivalent to $(K_n(A), th_A-(\beta/m)\delta)$.
\end{thm}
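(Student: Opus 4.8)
The plan is to combine the lattice-theoretic normalization achieved in Lemma~\ref{lem:bounding-coefficients} with the characterization of polarized parallel transport operators given by Theorem~\ref{thm:parallel}. First I would invoke Remark~\ref{remark:projective-def-classes}, which guarantees that polarized generalized Kummer varieties $K_n(A)$ are dense in their polarized deformation space. Hence the given pair $(X,h)$ of $K_n$-type is polarized deformation equivalent to some polarized pair $(K_n(A),h)$, with $h$ a primitive Hodge class; this reduces the problem to producing, starting from such a $(K_n(A),h)$, a target of the desired explicit shape $(K_n(A'), th_{A'}-(\beta/m)\delta)$ that is polarized deformation equivalent to it.

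Next I would apply Lemma~\ref{lem:bounding-coefficients} to the class $h$: this produces an abelian surface $A'$, a primitive polarization $h_{A'}$, and an integer $\beta\in\{0,\ldots,n+1\}$ such that, setting $m=\gcd(2n+2,\beta)$ and $h'=th_{A'}-(\beta/m)\delta$, the classes $h$ and $h'$ have the same monodromy invariant $f$. In particular $q_{X}(h)=q(h')$, since $f$ refines the value of the Beauville--Bogomolov square (the saturated lattice $T(h)$ carries the square of $h$). Thus the two hypotheses of Theorem~\ref{thm:parallel} are met: the squares of the (primitive) polarizations agree and $f_{X}(c_1(h))=f_{X}(c_1(h'))$.

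I would then feed these two equalities into Theorem~\ref{thm:parallel} to conclude that there exists a polarized parallel-transport operator from $(K_n(A),h)$ to $(K_n(A'),h')$. By Lemma~\ref{lem:mark}, the existence of such an operator together with the equality $f(h)=f(h')$ is exactly equivalent to the polarized deformation equivalence of the two pairs (one must check that $h'$ is represented by a genuine ample line bundle with positive pairing against a K\"ahler class, which is built into the construction of $h_{A'}$ as an ample primitive polarization in Lemma~\ref{lem:bounding-coefficients}). Composing this with the initial reduction yields that $(X,h)$ is deformation equivalent to $(K_n(A'),th_{A'}-(\beta/m)\delta)$, which after renaming $A'$ to $A$ and $h_{A'}$ to $h_A$ is precisely the asserted statement, with $t$ the divisibility of $h$.

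The main obstacle I anticipate is bookkeeping rather than conceptual: one must verify that the function $f$ genuinely controls \emph{polarized} (as opposed to merely unpolarized) deformation equivalence, which requires the ampleness/K\"ahler-positivity clause of the second bullet of Lemma~\ref{lem:mark}. This forces care in ensuring the target class $h'$ is ample on $K_n(A')$ and not merely a $(1,1)$-class of the correct square and divisibility. The hypothesis $n>1$ is presumably needed here to guarantee that the monodromy-invariant $f$ is faithful and that the lattice $H^2(K_n(A'),\Z)\cong U^3\oplus(-2n-2)$ contains enough copies of $U$ for Eichler's criterion (Lemma~\ref{lem:eichler}) to apply in the proof of Lemma~\ref{lem:bounding-coefficients}; I would keep track of this rank-$2$ hyperbolic requirement throughout.
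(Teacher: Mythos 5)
Your proposal is correct and follows essentially the same route as the paper's own proof: reduce to $X=K_n(A_0)$ via Remark~\ref{remark:projective-def-classes} (together with the local Torelli theorem), apply Lemma~\ref{lem:bounding-coefficients}, and conclude by the Section~3 characterization of polarized parallel transport (Theorem~\ref{thm:parallel} and Lemma~\ref{lem:mark}). One minor point: the equality of Beauville--Bogomolov squares required in Theorem~\ref{thm:parallel} is most directly justified by the fact that Lemma~\ref{lem:bounding-coefficients} produces $h'$ as the image of $h$ under an isometry, rather than by saying that $f$ ``refines'' the square, since $f$ as defined records only the saturated lattice up to the group $\Gamma_v$.
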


\begin{proof}
Remark \ref{remark:projective-def-classes} and the local Torelli theorem allow us to assume that $X$ is of the form $K_n(A_0)$, where $A_0$ is an abelian surface (one can alternatively use \cite[Proposition 7.1]{Markman10}). Lemma \ref{lem:bounding-coefficients} shows that there exists a surface $A$, a primitive polarization $h_A$ on $A$ and an integer $\beta\in \{0, \ldots, n+1\}$ and an isomorphism $T(h)\ra T(th_A-\mu/m\delta)$ sending $h$ to $th_A-(\beta/m)\delta$, where $t$ is the divisibility of $h$ and $m=gcd(\beta,2n+2)$. 

By the characterization of the polarized parallel transport operators proved in Section 3,  the condition above ensures that $(X, h)$ and $(K_n(A), th_A-(\beta/m)\delta)$ are deformation equivalent.
\end{proof}

%
\subsection{Examples}\label{sec:ex}
%
Fix integers $n>0$, $2\leq k\leq n+1$ and $g=k$. For any polarised abelian surface $(A,H)$
with $\Pic(A)=\Z H_A$ such that $p_a(H)\geq g+2$, \cite[Theorem 1.1]{KLM} yields
a $g$-dimensional family $\mathcal C\to T,\ T\subset \{H_A\}$, of (singular) curves of geometric genus $g$. We use here the classical notation of $\{H_A\}$ to denote the continuous system  parametrizing curves in the linear system $|H_A|$
and in all of its
translates by points of
$A$. As for our choice of $d$ and $g$ the Brill-Noether number $\rho(g,1,k+1)$ is positive, each curve of the family has a normalization possessing a $\mathfrak g^1_{k+1}$. Inside the relative $(k+1)$-th 
symmetric product $Sym_{k+1} \tilde {\mathcal C}/ T$ of the family $ \tilde {\mathcal C}/ T$ of 
the normalizations consider the family $ \tilde {\mathcal C}^1_{k+1}/ T$ of the $\mathbb P^1$'s associated to the $\mathfrak g^1_{k+1}$'s. We denote by $\mathbb P^1_{\mathfrak g^1_{k+1}}$ the image, inside $A^{[k+1]}$, of the associated rational curve.
Its class can be easily computed, thanks to Riemann-Hurwitz and the intersection pairing in $Pic(K_k(A))=NS(A)\oplus \mathbb Z \delta_k$
(see  e.g. \cite[Section 3]{KLM2}). We have
$$
 [\mathbb P^1_{\mathfrak g^1_{k+1}}]= H_A - 2k r_k.
$$
Take a generic subscheme $\eta\in A^{[n-k+1]}$ such that $\supp(\eta)\cap \supp(\mathbb P^1_{\mathfrak g^1_{k+1}})=\emptyset$. By abuse of notation, we still denote by 
$\mathbb P^1_{\mathfrak g^1_{k+1}}$ the rational curve in $K_n(A)$ obtained by adding to $\mathbb P^1_{\mathfrak g^1_{k+1}}\subset K_k(A)$ the subscheme $\eta$. It is easy to check that its class in $N_1(K_n(A))$ is
 $$
 [\mathbb P^1_{\mathfrak g^1_{k+1}}]= H_A - 2k r_n.
$$
Counting dimensions we also record the following equality
$$
\dim(\tilde {\mathcal C}^1_{k+1}/ T)= \rho(g,1,k+1) + 1 + g= 2k+1.
$$
The fact that for general $t\in T$ we have $\dim(W^1_{k+1}(\tilde C_t))=\rho(g,1,k+1)$ follows e.g. from \cite[Theorem 1.6, (ii)]{KLM}. Notice however that here we need only the inequality  
$\dim(W^1_{k+1}(\tilde C_t))\geq \rho(g,1,k+1)$
which follows from the positivity of $\rho(g,1,k+1)$ and from the classical Brill-Noether theory (see e.g. \cite{ACGH}).
As for the choice $g=k$ the natural map from $ \tilde {\mathcal C}^1_k/ T$ to $A^{[k+1]}$ is finite onto its image (cf. \cite[Example 4.1, 3)]{V15}), we get in this way a uniruled divisor inside $K_k(A)$. 

Finally, by considering for all $k=1,\ldots, n$ the (closure of the) image of the rational map
from $\tilde {\mathcal C}^1_k/ T + A^{(n-k+1)}$ into $A^{[n+1]}$ and restricting it to the fiber of the Albanese map, we obtain $n+1$ different uniruled divisors $D_1,\ldots, D_{n+1}$ inside $K_n(A)$, whose classes are dual, with respect to the Beauville-Bogomolov pairing, to the classes of the rational curves $\mathbb P^1_{\mathfrak g^1_{k+1}}$. 
Precisely, for $k=1,\ldots,n+1$, we have
$$
[D_k]= H_A- \frac{k}{n+1}\delta_n.
$$ 
In particular
$$
q(D_k)=2p_a(H_A)-2 - 2\cdot \frac{k^2}{n+1}= 2(p_a(H_A)-1 - g\cdot \frac{g}{n+1})>0,
$$
for $p_a(H)\geq g+2.$

Notice that this covers all possible monodromy orbits for divisors $h$ such that the class $[h/div(h)]$  in the discriminant group is even. 

To obtain the other cases, we proceed as follows. 
Let $\xi\in K_n(A)$ be a subscheme corresponding to a (simple) ramification point of a linear series
${\mathfrak g}^1_{k+1}$ on the normalization of a curve in $A$. Consider the only exceptional rational curve $\mathbb P^1_\xi$ passing through $\xi$. Let 
\begin{equation}\label{eq:red}
C_k:= \mathbb P^1_{\mathfrak g^1_{k+1}}\cup \mathbb P^1_\xi. 
\end{equation}
Therefore we have
$$
 [C_k]= H_A - (2k-1) r_n.
$$
The dual classes, with respect to the Beauville-Bogomolov pairing, are 
$$
 [D_k]= H_A- \frac{2k-1}{2(n+1)}\delta_n.
$$

Finally, if $\lambda :=gcd (k, n+1)$ (respectively $\lambda :=gcd (2k-1, 2(n+1))$) write 
$$
 k= \lambda \cdot s,\  n+1 = \lambda \cdot t
$$
(respectively $2k-1= \lambda \cdot s\ , 2(n+1) = \lambda \cdot t$).
Then $t$ is the smallest positive integer such that $t [D_k]$ is integral. 
To obtain the case of divisibility one, we can always find an abelian surface $A'$ and a primitive divisor $H'\in NS(A')$ such that  $(K_n(A), H_A-\delta_n)$ 
is deformation equivalent to $(K_n(A'), H')$ .

Notice that, by construction, the classes
$$
 t [D_k] = t H_A - s \delta_n
$$
have divisibility equal to $t$ and cover all the possible polarizations  appearing in Theorem \ref{thm:control-of-polarization}.
\begin{rmk} 
{\em Notice that the classes $D_k$ and $D'_k$ constructed above may not be ample, but as they have positive square, by Huybrechts's projectivity criterion \cite{Huy}, we can always deform $S^{[n]}$ together with $D_k$ (or $D'_k$) to a polarize irreducible holomorphic symplectic variety $(X',h')$. }
\end{rmk}
%
\subsection{Proof of Theorem 1.2}
%

Let $X$ be a smooth projective irreducible holomorphic symplectic variety of dimension $2n$ and let $f : C\ra X$ be a map from a stable curve $C$ of genus zero to $X$. We assume furthermore $f$ is unramified at the generic point of each irreducible component of $C$. Let $\mathcal X\ra B$ be a smooth projective morphism of smooth connected quasi-projective varieties and let $0$ be a point of $B$ such that $\mathcal X_0=X$. Let $\mathcal \alpha$ be a global section of Hodge type $(2n-1,2n-1)$ of $R^{4n-2}\pi_*\Z$ such that $\alpha_0=f_*[C]$ in $H^{4n-2}(X, \Z)$. Consider the relative Kontsevich moduli stack of genus zero stable curves $\overline{{\mathcal M_0}}(\mathcal X/B, \alpha)$, parametrizing maps $f : C\ra X$ from genus zero stable curves to fibers $X=\mathcal X_b$ of $\pi$ such that $f_*[C]=\alpha_b$ (cf. e.g. \cite{AV}). If $f$ is a stable map, we denote by $[f]$ the corresponding point of the Kontsevich moduli stack.

 Let $f:\mathbb P^1\to X$ be a non-constant map. 
Let $ M$ be an irreducible component of $\overline{ M_0}(X, f_*[\mathbb P^1])$ containing $[f]$, and let $Y$ be the subscheme of $X$ covered by the deformations of $f$ parametrized by $ M$. Let $C \to  M$ be the universal curve  
and 
$ev:C\to Y\subset X$ be the induced evaluation map. 
%

\begin{prop}[\cite{CP}, Corollary 3.5]\label{prop:divisor}
Let everything be as above.
If $Y$ is a divisor in $X$, then: 
\begin{enumerate}
\item Any irreducible component of $\overline{\mathcal M_0}(\mathcal X/B, \alpha)$ containing $[f]$ dominates $B$. In particular, the stable map $f : C\ra X$ deforms over a finite cover of $B$;
\item for any point $b$ of $B$, the fiber $X_b$ of $\mathcal X$ over $b$ contains a uniruled divisor.
\end{enumerate}
\end{prop}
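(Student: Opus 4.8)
The plan is to analyze the deformation theory of the stable map $f : C \ra X$ inside the relative Kontsevich moduli space, using the hypothesis that the deformations of $f$ within a single fiber already sweep out a divisor $Y$. The key structural input is the unobstructedness of the symplectic geometry: on an irreducible holomorphic symplectic variety the expected dimension of a component of $\overline{M_0}(X, f_*[\mathbb P^1])$ forces rational curves to move in families of the maximal possible dimension whenever their locus is a divisor. I would therefore begin by computing the expected dimension of the component $M$ of the Kontsevich space of the central fiber and comparing it against the dimension of $Y$; since $Y$ is a divisor, $\dim Y = 2n-1$, and the general fiber of $ev : C \to Y$ must be one-dimensional, so $\dim M \geq 2n-1$ (with equality governed by the symplectic form pairing trivially along the ruling).

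\emph{First step.} I would set up the obstruction theory for the relative Kontsevich stack $\overline{\mathcal M_0}(\mathcal X/B, \alpha)$ over $B$. The tangent-obstruction complex of a stable map $f$ to a fiber $X_b$ is controlled by $H^0(C, f^* T_{X_b})$ and $H^1(C, f^* T_{X_b})$; the relative version adds the tangent directions of $B$. The crucial point is to bound the relative obstruction space and show that the component containing $[f]$ has dimension at least $\dim B$ more than the fiberwise component, so that the projection to $B$ is dominant.

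\emph{Second step (the heart of the matter).} The hard part will be to rule out the possibility that the component $M'$ of $\overline{\mathcal M_0}(\mathcal X/B, \alpha)$ containing $[f]$ maps to a proper subvariety of $B$, i.e.\ that the rational curve fails to deform sideways out of the central fiber. Here I would invoke the holomorphic symplectic structure: the class $\alpha$ is of Hodge type $(2n-1, 2n-1)$ and stays of this type over all of $B$ (it is a flat section), so the curve class remains of Hodge type along the whole base and the relevant component of the Hilbert/Kontsevich scheme persists. One then argues, via a dimension count analogous to \cite[Corollary 3.5]{CP}, that because $Y$ is a divisor the deformations of $f$ cannot all be confined to $X_0$ without violating the expected-dimension estimate for $M'$ relative to $B$. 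The deformation of a ruling of a divisor is forced to propagate because the divisor itself deforms with the family (its class being monodromy-invariant and of the correct Hodge type).

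\emph{Conclusion.} Once domination of $B$ is established, part (1) follows immediately: a general point of $M'$ lies over a point of $B$ near $0$, and by the curve-selection/spreading-out argument the stable map $f$ deforms over a finite cover of $B$. For part (2), I would take any $b \in B$; since $M'$ dominates $B$, there is a stable map $f_b : C_b \ra X_b$ with $(f_b)_*[C_b] = \alpha_b$, and its deformations inside $X_b$ sweep out a subvariety $Y_b$ whose class is the specialization of $[Y]$; a semicontinuity argument on $\dim Y_b$ together with the fact that $[Y]$ pairs as a divisor class under the Beauville--Bogomolov form forces $Y_b$ to again be a divisor. The main obstacle is thus the second step: controlling the relative obstruction space well enough to guarantee that the divisorial locus, rather than merely the curve, propagates across the entire base $B$.
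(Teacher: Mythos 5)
First, a remark on the comparison itself: the paper does not prove this proposition at all — it is imported verbatim from \cite{CP} (Corollary 3.5), so the only proof to measure your sketch against is the one in that reference. Your sketch does share the overall shape of that argument (play a deformation-theoretic \emph{lower} bound for the relative Kontsevich space against an \emph{upper} bound for the fiberwise one, and conclude that the component containing $[f]$ must dominate $B$), but two of the quantitative inputs you rely on are wrong or missing, and the step you yourself call the heart of the matter is circular.

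Concretely: (1) your opening dimension count is incorrect. For a component $M$ of $\overline{M_0}(X, f_*[\mathbb P^1])$ sweeping out a divisor $Y$ in an irreducible holomorphic symplectic $2n$-fold, the evaluation map is \emph{generically finite} onto $Y$: through a general point of $Y$ there are only finitely many ruling curves (they are constrained by the one-dimensional kernel of the symplectic form restricted to the coisotropic divisor $Y$), so $\dim M = 2n-2$, not $\geq 2n-1$ via a one-dimensional general fiber of $ev$ as you claim. (2) The engine of the proof in \cite{CP} is the lower bound that every irreducible component of $\overline{\mathcal M_0}(\mathcal X/B, \alpha)$ containing $[f]$ has dimension at least $\dim B + 2n-2$, i.e.\ \emph{one more} than the expected dimension $\dim B + 2n-3$; this extra $+1$ comes from Ran's results on deformations of curves in holomorphic symplectic manifolds (a semiregularity-type statement), and it is exactly what produces a contradiction against the fiberwise bound $2n-2$ when the component fails to dominate $B$. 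You invoke only "the expected dimension," which is too weak: with $2n-3$ against $2n-2$ no contradiction arises, and the component could a priori live over a proper subvariety of $B$. (3) At your crucial second step you argue "via a dimension count analogous to [CP, Corollary 3.5]" — but that corollary \emph{is} the statement being proved, so the argument as written is circular; likewise, saying the divisor "is forced to propagate because \ldots its class [is] monodromy-invariant and of the correct Hodge type" assumes what must be shown, since a class remaining Hodge does not by itself produce a uniruled divisor in nearby fibers. Finally, for part (2), the semicontinuity-plus-Beauville--Bogomolov pairing argument you propose is not the right mechanism: once the stable map deforms into every fiber $X_b$, one re-applies the fiberwise lower bound ($\geq 2n-2$) together with the fact that $X_b$, having trivial canonical bundle, is not uniruled; hence the locus swept out in $X_b$ is a proper subvariety of dimension at least $2n-1$, i.e.\ a divisor.
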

\begin{proof}[Proof of Theorem \ref{thm:existence}]
The result follows from Theorem \ref{thm:control-of-polarization}, Proposition \ref{prop:divisor} (and Remark \ref{rmk:red}) and the examples presented in Section \ref{sec:ex}.
\end{proof}
%

\begin{rmk}\label{rmk:red}
{\em To deal with the reduced case it is sufficient to observe that the reduced curves $C_k= \mathbb P^1_{\mathfrak g^1_{k+1}}\cup \mathbb P^1_\xi$
form a complete family of dimension $2n-2$. Then one can invoke 
\cite[Proposition 3.1]{CP}, which is valid also in the reducible case, to conclude that the curves $C_k$ can be deformed along their Hodge locus.
}
\end{rmk}

\end{document}